\newtheorem{thm}{Theorem}[section]
\newtheorem{cor}[thm]{Corollary}
\newtheorem{lem}[thm]{Lemma}
\newtheorem{clm}[thm]{Claim}
\theoremstyle{definition}
\theoremstyle{remark}
\newtheorem{rem}[thm]{Remark}
\numberwithin{equation}{section}
\def\Q{{\mathbb Q}}
\newcommand{\ra}{\rightarrow}
\def\MCG{{\mathcal MCG}}
\begin{document}
\title{$\mathcal{C}_{sep}(S_{2,0})$ is $\delta$-hyperbolic}
\author{Harold Sultan}
\address{Department  of Mathematics\\Columbia University\\
New York\\NY 10027}
\email{HSultan@math.columbia.edu}
\date{\today}
\begin{abstract}  A proof that $\mathcal{C}_{sep}(S_{2,0})$ has a quasi-distance formula and is $\delta$-hyperbolic using tools of Masur-Schleimer, \cite{masurschleimer}. This provides a proof in the affirmative of Conjecture 2.48 of \cite{schleimer}.  
 \end{abstract} 
\maketitle

 \tableofcontents
 
\section{Introduction}
It is well known that the curve complex $\mathcal{C}(S)$ is $\delta$-hyperbolic for all surfaces of positive complexity, see \cite{MM1}.  On the other hand, the separating curve complex $\mathcal{C}_{sep}(S)$ in general is not $\delta$-hyperbolic.  In particular, for all closed surfaces $S=S_{g,0}$ with genus $g \geq 3,$ as noted in \cite{schleimer}, $\mathcal{C}_{sep}(S)$ contains natural non-trivial \emph{quasi-flats}, or quasi-isometric embeddings of Euclidean flats; an obstruction to hyperbolicity.  For $S_{2,0}$ however, unlike closed surfaces of higher genus, there are no natural non-trivial quasi-flats.  Given this context, Schleimer conjectures that $\mathcal{C}_{sep}(S_{2,0})$ is $\delta$-hyperbolic; see \cite{schleimer} Conjecture 2.48.  In this paper, we prove this conjecture in the affirmative.  Note that the natural embedding $i:\mathcal{C}_{sep}(S) \ra \mathcal{C}(S)$ is known not to be a quasi-isometric embedding for all surfaces, and hence the proof of the conjecture does not follow from the hyperbolicity of the curve complex, \cite{MM1}.  

\begin{rem} While a proof that $\mathcal{C}_{sep}(S_{2,0})$ is $\delta$-hyperbolic is implicit in the work of Brock-Masur, \cite{brockmasur}, it is somewhat hidden, and so in this paper we present an alternative proof of this fact which is independent of their results.  In fact, since putting this paper on the arXiv, we have been informed that a very recent paper of Ma, \cite{ma}, proves the $\delta$-hyperbolicity of $\mathcal{C}_{sep}(S_{2,0})$ using the aforementioned work of Brock-Masur.
\end{rem}

The ideas in this paper are similar to, as well as motivated by, work of Masur-Schleimer in \cite{masurschleimer}.  Specifically, in \cite{masurschleimer} axioms are established for when a combinatorial complex has a quasi-distance formula and is $\delta$-hyperbolic.  In particular, Masur and Schleimer use these axioms to prove that the disk complex and the arc complex are $\delta$-hyperbolic.  While due to a technicality, the Masur-Schleimer axioms do not all hold in the case of $\mathcal{C}_{sep}(S_{2,0}),$ nonetheless, with enough care in this paper we are able to show by a direct argument that $\mathcal{C}_{sep}(S_{2,0})$ has a quasi-distance formula.  Furthermore, adapting the proof of $\delta$-hyperbolicity of Masur-Schleimer slightly, in conjunction with ideas used in proving the quasi-distance formula for $\mathcal{C}_{sep}(S_{2,0})$ we conclude by showing that $\mathcal{C}_{sep}(S_{2,0})$ is $\delta$-hyperbolic.  

The outline of the paper is as follows.  In the second section relevant background material is introduced.  The third section contains the core content of the paper including a proof of the quasi-distance formula for $\mathcal{C}_{sep}(S_{2,0})$ as well as a proof of $\delta$-hyperbolicity.  In the final section, a potential alternative proof of $\delta$-hyperbolicity is suggested.
 
  \subsection*{Acknowledgements}
$\\$
$\indent$
I want to express my gratitude to my advisors Jason Behrstock and Walter Neumann for their extremely helpful advice and insights throughout my research.  I would also like to acknowledge Saul Schleimer for useful conversations relevant to this paper.  All figures were drawn in Adobe Illustrator.  
\section{Preliminaries}
\subsection{Curve Complex and Separating Curve Complex} Given any surface of finite type, $S=S_{g,n},$ that is a genus $g$ surface with $n$ boundary components (or punctures), the \emph{complexity} of $S,$ denoted $\xi(S),$ is a topological invariant defined to be $3g-3+n.$  A simple closed curve in $S$ is \emph{peripheral} if it bounds a disk containing at most one boundary component; a non-peripheral curve is \emph{essential}.  Throughout the paper, we will use the term curve to refer to a geodesic representative of an isotopy class of a simple closed curve on a hyperbolic surface of finite type. 

For $S$ any surface with positive complexity, the \emph{curve complex} of $S,$ denoted $\mathcal{C}(S),$ is the simplicial complex obtained by associating to each curve a 0-cell, and more generally a k-cell to each unordered tuple of $k+1$ disjoint curves, or \emph{multicurves}.  In the special case of low complexity surfaces which do not admit disjoint curves, we relax the notion of adjacency to allow edges between vertices corresponding to curves which intersect minimally on the given surface.  Along similar lines, given a curve $\gamma \subset S$ one can define an annular arc complex, $\mathcal{C}(\gamma),$ to have vertices corresponding to homotopy classes relative to the boundary of arcs connecting the two boundary components of an annular regular neighborhood of $\gamma,$ and edges between arcs with representatives intersecting once in the annulus.  

Among curves on a surface of finite type we differentiate between two types of curves, namely separating and non-separating curves.  Specifically, a curve $\gamma \subset S$ is \emph{separating} if $S \setminus \gamma$ consists of two connected components, and \emph{non-separating} otherwise.  Given this distinction, we define the \emph{separating curve complex}, denoted $\mathcal{C}_{sep}(S),$ to be the restriction of the usual curve complex to the subset of separating curves.  Notice that for certain low complexity surfaces such as $S_{2,0},$ as defined $\mathcal{C}_{sep}(S)$ is a totally disconnected space, as no two separating curves are disjoint.  Accordingly, in such circumstances, as in the definition of the curve complex for low complexity surfaces, we relax the definition of connectivity and define two separating curves to be connected by an edge if the curves intersect minimally on the given surface.  In particular, for the case of $S_{2,0}$ two separating curves in $\mathcal{C}_{sep}(S_{2,0})$ are connected if and only if they intersect four times. 
\subsection{Essential Subsurfaces, Projections}
An \emph{essential subsurface} $Y$ of a surface $S$ is a subsurface $Y$ with geodesic boundary such that $Y$ is a union of (not necessarily all) complementary components of a multicurve.  An essential subsurface $Y\subseteq S$ is \emph{proper} if $Y\subsetneq S.$  Two essential surfaces $W,V \subset S$ are \emph{disjoint} if they have empty intersection.  On the other hand, we say $W$ is \emph{nested} in $V$, denoted $W \subsetneq V$, if the set of curves that are  contained in $W$ as well as $\partial W$ are contained in $V$.  If W and V are not disjoint, yet neither subsurface is nested in the other, we say that $W$ \emph{overlaps} $V$, denoted $W \pitchfork V $.  

Given a curve $\alpha \in \mathcal{C}(S)$ and a connected essential subsurface $Y\subset S$ with $\xi(Y)\geq 1$ such that $\alpha$ intersects $Y,$  we can define the projection of $\alpha$ to $2^{\mathcal{C}(Y)}$, denoted $\pi_{\mathcal{C}(Y)}(\alpha)$, to be the collection of vertices in $\mathcal{C}(Y)$ obtained by surgering the arcs of $\alpha \cap Y$ along $\partial Y$ to obtain simple closed curves in $Y$.  More formally, the intersection $\alpha \cap Y$ consists of either the curve $\alpha,$ if $\alpha \subset Y,$ or a non-empty disjoint union of arc subsegments of $\alpha$ with the endpoints of the arcs on boundary components of $Y.$  In the former case we define the projection $\pi_{\mathcal{C}(Y)}(\alpha)=\alpha$, whereas in the latter case, $\pi_{\mathcal{C}(Y)}(\alpha)$ consists of all curves in $Y$ obtained by closing up all the arcs in the intersection $\alpha \cap Y$ into curves by taking the union of each of the arcs and a neighborhood of the boundary $\partial Y$, or the \emph{frontier of $Y$}.  See Figure \ref{fig:curveproj} for an example.  Similarly, given any curves $\alpha, \gamma \subset S$ such that $\alpha \cap \gamma \ne \emptyset,$ one can define an annular projection which sends $\alpha$ to $2^{\mathcal{C}(\gamma)}.$  See \cite{MM2} for more details on subsurface and annular projections.  To simplify notation, when measuring distance in the image subsurface complex, we write $d_{\mathcal{C}(Y )} (\alpha_{1}, \alpha_{2})$ as shorthand for 
$d_{\mathcal{C}(Y )} (\pi_{\mathcal{C}(Y)}(\alpha_{1}), \pi_{\mathcal{C}(Y)}(\alpha_{2}))$.

\begin{figure}[htpb]
\centering
\includegraphics[height=2.75 cm]{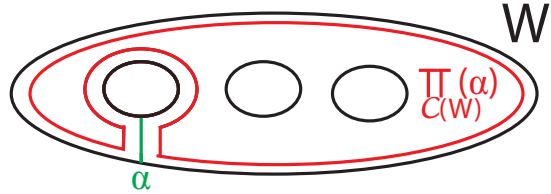}
\caption{Surgering the arc $\alpha \cap W$ along a neighborhood of the boundary of $W$ into a simple closed curve $\pi_{\mathcal{C}(W)}(\alpha) \in \mathcal{C}(W)$  }\label{fig:curveproj}
\end{figure}

\subsection{Combinatorial Complexes and Holes}
In this paper, a \emph{combinatorial complex}, $\mathcal{G}(S),$ will be a graph with vertices defined in terms of multicurves on the surface and edge relations defined in terms of upper bounds on intersections between the multicurves.  In addition, we will assume that combinatorial complexes are invariant under an isometric action of the mapping class group, $\MCG.$  Examples of combinatorial complexes include the separating curve complex, $\mathcal{C}_{sep}(S),$ the arc complex, $\mathcal{A}(S),$ the pants complex, $\mathcal{P}(S),$ the marking complex, $\mathcal{M}(S),$ as well as many others in the literature.  

A \emph{hole} for $\mathcal{G}(S)$ is defined to be any connected essential subsurface such that the entire combinatorial complex has non-trivial subsurface projection into it.  For example, it is not hard to see that holes for the arc complex $\mathcal{A}(S),$ are precisely all connected essential subsurfaces $Y$ such that $\partial S \subset \partial Y.$  

The central idea in \cite{masurschleimer} is that distance in a combinatorial complex is approximated by summing over the distances in the subsurface projections to the curve complexes of holes.  In particular, due to the action by $\MCG,$ if a complex has disjoint holes then the complex admits non-trivial quasi-flats, and hence cannot be $\delta$-hyperbolic.  Conversely, if a combinatorial complex has the property that no two holes are disjoint, then assuming a couple of additional Masur-Schleimer axioms, see \cite{masurschleimer}, the complex is $\delta$-hyperbolic.         

\subsection{Marking Complex and Hierarchy paths} A \emph{complete marking} $\mu$ on $S$ is a collection of \emph{base curves} and \emph{transverse curves} subject to the following conditions: 
\begin{enumerate}
\item The base curves $\{\gamma_{1}, ..., \gamma_{n} \}$ are a maximal dimensional simplex in $\mathcal{C}(S)$.  Equivalently $n=\xi(S)$. 
\item Each base curve $\gamma_{i}$ has a corresponding transversal curve $t_{i},$ transversely intersecting $\gamma_{i},$ such that $t_{i}$ intersects $\gamma_{i}$ exactly once (unless $S=S_{0,4}$ in which case $t_{1}$ intersects $\gamma_{1}$ twice). 
\end{enumerate}  
A complete marking $\mu$ is said to be \emph{clean} if in addition each transverse curve $t_{i}$ is disjoint from all other base curves $\gamma_{j}.$  Let $\mu$ denote a complete clean marking with curve pair data $(\gamma_{i},t_{i}),$ then we define an \emph{elementary move} to be one of the following two operations applied to the marking $\mu:$
\begin{enumerate}
\item \emph{Twist}: For some $i,$ we replace $(\gamma_{i},t_{i})$ with $(\gamma_{i},t'_{i})$ where $t'_{i}$ is the result of one full or half twist (when possible) of $t_{i}$ around $\gamma_{i}.$   
\item \emph{Flip}: For some $i$ we interchange the base and transversal curves.  After a flip move, it is possible that the resulting complete marking is no longer clean, in which case as part of the flip move we then replace the non-clean complete marking with a \emph{compatible} clean complete marking.  Two complete markings $\mu,$ $\mu'$ are compatible if they have the same base curves and moreover $\forall i,$ $d_{\mathcal{C}(\gamma_{i})}(t_{i},t'_{i})$ is minimal over all choices of $t'_{i}.$  In \cite{MM2} it is shown that there is a bound, depending only on the topological type of $S,$ on the number of complete clean markings which are compatible with any given complete marking.  Hence, a flip move is coarsely well defined.   
\end{enumerate}

The \emph{Marking Complex}, $\mathcal{M}(S),$ is defined to be the graph formed by taking complete clean markings of $S$ to be vertices and connecting two vertices by an edge if they differ by an elementary move. 
In \cite{MM2} a 2-transitive family of quasi-geodescis in $\mathcal{M}(S)$, with constants depending only on the topological type of $S$ called \emph{resolutions of hierarchies} are developed.  Informally, hierarchies are defined inductively as of a union of tight geodesics in the curve complexes of essential subsurfaces of $S,$ while resolutions of hierarchies are quasi-geodesics in the marking complex associated to a hierarchy.  By abuse of notation, throughout this paper we will refer to resolutions of hierarchies as hierarchies.  The construction of hierarchies is technical, although for our purposes the following theorem recording some of their properties suffices.
\begin{thm} \label{thm:hierarchy} For $S=S_{g,n},$ $\forall \mu, \nu \in \mathcal{M}(S)$, there exists a hierarchy path $\rho=\rho(\mu,\nu):[0,n] \ra \mathcal{M}(S)$ with $\rho(0)=\mu$, $\rho(n)=\nu$.  Moreover, $\rho$ is a quasi-isometric embedding with uniformly bounded constants depending only on the topological type of $S$, with the following properties: 
\begin{enumerate}
\item  [H1:] The hierarchy $\rho$ \emph{shadows} a tight $\mathcal{C}(S)$ geodesic $g_{S}$ from a multicurve $a \in base(\mu)$ to a multicurve $b\in base(\nu)$, called the \emph{main geodesic of the hierarchy}.  That is, there is a monotonic map $\phi:\rho \ra g_{S}$ such that $\forall i, \; \phi(\rho(i))\in g_{S}$ is a base curve in the marking $\rho(i)$. 
\item [H2:] There is a constant $M_{1}$ such that if an essential subsurface of complexity at least one or an annulus $Y\subset S$ satisfies $d_{\mathcal{C}(Y)}(\mu,\nu) >M_{1}$, then there is a maximal connected interval $I_{Y}=[t_{1},t_{2}]$ and a tight geodesic $g_{Y}$ in $\mathcal{C}(Y)$  from a multicurve in $base(\rho(t_{1}))$ to a multicurve in $base(\rho(t_{2}))$ such that for all $t_{1}\leq t \leq t_{2}$, $\partial Y$ is a multicurve in $base(\rho(t))$ and  $\rho |_{I_{Y}}$ shadows the geodesic $g_{Y}$.  Such a subsurface $Y$ is called a \emph{component domain} of $\rho$.  By convention the entire surface $S$ is always considered a component domain.  
\end{enumerate}
\end{thm}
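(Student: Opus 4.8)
The plan is to assemble this statement from the hierarchy machinery of Masur--Minsky \cite{MM2}, of which it is essentially a repackaging; accordingly the ``proof'' is largely a matter of citing the correct pieces and checking that their output has the asserted form. First I would invoke the construction of a \emph{hierarchy} $H=H(\mu,\nu)$ between the two markings. This produces a collection of tight geodesics $\{h\}$, each supported in an essential subsurface $D(h)\subseteq S$ and organized by the forward and backward subordinacy relations, together with a distinguished \emph{main geodesic} $g_{S}$ with $D(g_{S})=S$ running from a multicurve in $base(\mu)$ to one in $base(\nu)$. The path $\rho$ is then the \emph{resolution} of $H$: following \cite{MM2} one sweeps through the hierarchy via a sequence of slices and elementary transition moves, yielding a sequence of complete clean markings, i.e. an edge-path $\rho:[0,n]\ra\mathcal{M}(S)$ with the prescribed endpoints.

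For the quasi-isometric embedding claim I would appeal to the Masur--Minsky distance formula for the marking complex, which states that for a threshold $K$ large enough, $d_{\mathcal{M}(S)}(\mu,\nu)\asymp\sum_{Y}[d_{\mathcal{C}(Y)}(\mu,\nu)]_{K}$, the sum ranging over all essential subsurfaces and annuli $Y\subseteq S$ and $[\,\cdot\,]_{K}$ denoting the cutoff function. The number of transition moves in the resolution is coarsely this same sum, so $\rho$ has length comparable to $d_{\mathcal{M}(S)}(\mu,\nu)$; since any subpath of a resolution coarsely resolves the sub-hierarchy between its endpoints, the same comparison holds along $\rho$, giving the uniform quasi-isometric embedding with constants depending only on $\xi(S)$.

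Property H1 I would read off directly from the resolution procedure: the sweep advances monotonically along the main geodesic $g_{S}$, and at each stage the current marking contains as a base curve the vertex of $g_{S}$ currently being shadowed, which defines the monotonic map $\phi:\rho\ra g_{S}$. For H2 the key input is the Bounded Geodesic Image Theorem of \cite{MM2}: there is a constant, which determines $M_{1}$, such that any $\mathcal{C}(S)$-geodesic all of whose vertices meet $Y$ has $\mathcal{C}(Y)$-diameter at most that constant. Hence if $d_{\mathcal{C}(Y)}(\mu,\nu)>M_{1}$, some vertex of a geodesic lying above $Y$ in the subordinacy order must be disjoint from $Y$, which is precisely the condition forcing $Y$ to appear as a \emph{component domain} of $H$ and to carry its own tight geodesic $g_{Y}$. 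The interval $I_{Y}=[t_{1},t_{2}]$ is then the span of resolution times during which $g_{Y}$ is active, equivalently during which $\partial Y$ lies in $base(\rho(t))$, and on this interval $\rho$ shadows $g_{Y}$ by the same monotonicity used for H1.

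The main obstacle is not the construction itself --- which is imported wholesale from \cite{MM2} --- but verifying that the resolution can be taken to have precisely the monotonicity and active-interval structure asserted, and in particular pinning down $M_{1}$ so that the correspondence large projection $\Leftrightarrow$ component domain holds in both directions. Care is needed because a single subsurface $Y$ may accumulate projection distance from more than one geodesic of the hierarchy; controlling this is exactly what the large-links estimates of \cite{MM2} provide, and they also guarantee that the active interval $I_{Y}$ is a single connected block rather than a union of disjoint ones.
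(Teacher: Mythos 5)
Your proposal is correct and matches the paper's treatment: the paper states this theorem purely as a record of known properties of hierarchies and their resolutions, imported wholesale from Masur--Minsky \cite{MM2}, with no independent proof given. Your assembly of the relevant ingredients (hierarchy construction and resolution, the distance formula for $\mathcal{M}(S)$, and the large-links/bounded-geodesic-image estimates yielding the component-domain structure in H2) is exactly the justification the paper implicitly relies on.
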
  

The next theorem contains a quasi-distance formula for $\mathcal{M}(S),$ which we later generalize to $\mathcal{C}_{sep}(S_{2,0}).$
\begin{thm} \cite{MM2} \label{thm:quasidistance}For $S=S_{g,n}$ there exists a minimal threshold $C,$ depending only on the surface $S,$ and quasi-isometry constants, depending only on the surface $S$ and the threshold $c\geq C,$ such that $\forall \mu,\nu \mathcal{M}(S)$: 
$$d_{\mathcal{M}(S)}(\mu,\nu) \approx \sum_{Y\subseteq S} \{ d_{\mathcal{C}(Y)}(base(\mu),base(\nu)) \}_{c} $$ 
where the sum is over all essential subsurfaces $Y$ with complexity at least one, as well as all annuli, and the \emph{threshold function} $\{ f(x) \}_{c} :=f(x) \;\; \mbox{ if } f(x) \geq c$ and $0$ otherwise.
\end{thm}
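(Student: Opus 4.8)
The plan is to establish the two-sided estimate by comparing both quantities to the length of a hierarchy path, invoking Theorem \ref{thm:hierarchy}. Fix $\mu,\nu \in \mathcal{M}(S)$ and let $\rho = \rho(\mu,\nu):[0,n]\ra \mathcal{M}(S)$ be the hierarchy path supplied by that theorem. Since $\rho$ is a quasi-isometric embedding of $[0,n]$ with uniform constants, its combinatorial length already satisfies $n \approx d_{\mathcal{M}(S)}(\mu,\nu)$. Thus the whole problem reduces to proving the single comparison
$$ n \;\approx\; \sum_{Y\subseteq S} \{ d_{\mathcal{C}(Y)}(base(\mu),base(\nu)) \}_{c}, $$
after which the theorem follows by transitivity of $\approx$.

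First I would prove $\sum_Y \{ d_{\mathcal{C}(Y)} \}_c \preceq n$. The key input is that subsurface projection is coarsely Lipschitz along edges of $\mathcal{M}(S)$: if markings $m,m'$ differ by one elementary move, then $d_{\mathcal{C}(Y)}(m,m') \leq A$ for a uniform constant $A$ and every domain $Y$, since $m$ and $m'$ share all but one curve pair and the differing transversals are related by a (half) twist. The triangle inequality along $\rho$ then gives $d_{\mathcal{C}(Y)}(\mu,\nu)\leq A\cdot n$ for each $Y$. The delicate point is to sum this over all $Y$ without overcounting; for that I would show that a single elementary move can change the projection by a definite amount in only boundedly many domains, so that $\sum_Y d_{\mathcal{C}(Y)}(m,m')$ is uniformly bounded per step. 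Summing over the $n$ steps and discarding, via the threshold, the domains contributing less than $c$ yields the desired inequality.

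For the reverse inequality $n \preceq \sum_Y \{ d_{\mathcal{C}(Y)} \}_c$ I would use the internal structure of $\rho$. By property H2, every component domain $Y$ carries a tight geodesic $g_Y$ that $\rho$ shadows over its interval $I_Y$, and the component domains are, up to the threshold $M_1$, exactly the domains with $d_{\mathcal{C}(Y)}(\mu,\nu) > M_1$. The resolution advances by essentially one elementary move each time it progresses one step along some $g_Y$, so $n$ is comparable to $\sum_Y |g_Y|$. Finally each $|g_Y|$ is comparable to $d_{\mathcal{C}(Y)}(\mu,\nu)$ up to additive error, because the endpoints of $g_Y$ lie within bounded distance of the projections of $\mu$ and $\nu$. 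Choosing $c \geq C$ large relative to $M_1$ guarantees the thresholded sum captures precisely these domains.

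The main obstacle is the counting step in the lower bound: although every move is coarsely Lipschitz on each projection, one must rule out a single step simultaneously advancing the projections of unboundedly many large domains. I would control this with the Behrstock inequality and the bounded geodesic image theorem, together with the fact that consecutive markings differ in only one curve pair, which confines the domains whose projection genuinely moves. I expect the threshold $c$ must be taken large enough, depending on $S$, for these organizing estimates to hold — precisely the role played by the minimal threshold $C$ in the statement.
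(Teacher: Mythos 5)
This statement is not proved in the paper at all: it is Masur--Minsky's distance formula, imported verbatim with the citation \cite{MM2}, so the only thing to compare your proposal against is Masur--Minsky's own argument. Your outline does follow their broad strategy (compare marking distance to the total length of a hierarchy, and the latter to the thresholded sum of projections), but as a proof it has a genuine gap in the direction $n \preceq \sum_Y \{ d_{\mathcal{C}(Y)}(\mu,\nu) \}_{c}$. Your claim that ``the component domains are, up to the threshold $M_1$, exactly the domains with $d_{\mathcal{C}(Y)}(\mu,\nu) > M_1$'' is true in only one direction: the Large Links lemma of \cite{MM2} guarantees that every domain with projection distance above $M_1$ is a component domain whose geodesic has comparable length, but the converse fails. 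A hierarchy typically contains many component domains whose geodesics are short (length $1$ is possible); these are invisible to the thresholded sum, and their number is not a priori bounded. Since $n$ is comparable to the sum of the lengths of \emph{all} geodesics in the hierarchy, you must bound the total length of the short geodesics (and the accumulated additive errors $|g_Y| \leq d_{\mathcal{C}(Y)}(\mu,\nu) + O(M_1)$ over unboundedly many terms) by a multiple of the thresholded sum. This is the technical heart of Masur--Minsky's proof: every component domain other than $S$ is spawned by a vertex of a geodesic one level up in the subordinacy structure, and an induction on depth (bounded by $\xi(S)$) gives a geometric-series bound on the contribution of short geodesics. Your proposal omits this entirely, and ``choosing $c \geq C$ large relative to $M_1$'' does not substitute for it --- that counting argument is precisely where the minimal threshold $C$ actually comes from, not the Behrstock inequality.

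There is also a flaw, though a repairable one, in your lower-bound direction. The hoped-for claim that ``a single elementary move can change the projection by a definite amount in only boundedly many domains'' is false as stated: consecutive markings share all but one curve pair, and a shared curve meeting a domain $Y$ only pins the distance between the two projections below a uniform constant --- it does not force the change to be zero --- so the projection can move, each time by a bounded nonzero amount, in infinitely many domains simultaneously. The correct mechanism, which you do gesture at, is the active-interval argument: for each domain with $d_{\mathcal{C}(Y)}(\mu,\nu) \geq c$ one isolates the times at which the path's projection to $Y$ is far from the projections of both endpoints, and shows these intervals are essentially disjoint across domains; Behrstock's inequality handles overlapping pairs, a bounded-geodesic-image/boundary-stalling argument handles nested pairs, and at most $\xi(S)$ large domains can be pairwise disjoint. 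With that reformulation the lower bound goes through; with your per-move counting as written, it does not.
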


\subsection{Farey Graph}
\begin{figure}[htpb]
\centering
\includegraphics[height=6 cm]{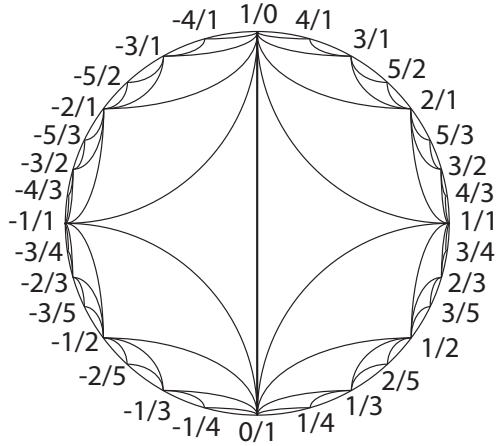}
\caption{A finite portion of the Farey Graph with labeled vertices.}\label{fig:farey}
\end{figure}

The Farey graph is a classical graph with direct application to the study of the curve complex.  Vertices of the Farey graph corresponding to elements of $\Q\cup \{\infty=\frac{1}{0}\},$ with edges between two rational numbers in lowest terms $\frac{p}{q}$ and $\frac{r}{s}$ if $|ps-qr|=1.$  The Farey graph can be drawn as an ideal triangulation of the unit disk as in Figure \ref{fig:farey}.  A nice feature of the Farey graph is the so called \emph{Farey addition property} which ensures that if rational number $\frac{p}{q}$ and $\frac{r}{s}$ are connected in the Farey graph, then there is an ideal triangle in the Farey graph with vertices $\frac{p}{q},$ $\frac{r}{s},$ and $\frac{p+r}{q+s}.$  

The curve complexes $\mathcal{C}(S_{0,4})$ and $\mathcal{C}(S_{1,1})$ are isomorphic to the Farey graph.  The isomorphism is given by sending the positively oriented meridional curve of the surfaces to $\frac{1}{0},$ the positively oriented  longitudinal curve of the surfaces to $\frac{0}{1},$ and more generally sending the $(p,q)$ curve to $\frac{p}{q}.$     

\section{Separating curve complex of the closed genus two surface  is hyperbolic: proof}
\label{sec:proof}
\begin{thm} \label{thm:main}
$\mathcal{C}_{sep}(S_{2,0})$ is $\delta$-hyperbolic
\end{thm}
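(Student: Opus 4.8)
The plan is to follow the Masur-Schleimer strategy in \cite{masurschleimer}, adapted to handle the technicality that their axioms do not literally apply to $\mathcal{C}_{sep}(S_{2,0})$. First I would establish the \emph{no disjoint holes} property, which is the geometric heart of hyperbolicity: I claim that for $\mathcal{G}(S)=\mathcal{C}_{sep}(S_{2,0})$, any two holes must overlap. A separating curve in $S_{2,0}$ is either a genus-one separating curve (splitting $S$ into two one-holed tori glued along the curve) or bounds a torus on each side, and in all cases the complement of a hole $Y$ must be a subsurface into which some separating curve fails to project nontrivially; analyzing the low-complexity pieces of $S_{2,0}$ one checks that there is simply not enough room for two disjoint essential subsurfaces both to be holes. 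This rules out the quasi-flats that obstruct hyperbolicity in higher genus, matching the remark in the introduction that $S_{2,0}$ admits no natural quasi-flats.

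Next I would prove the quasi-distance formula for $\mathcal{C}_{sep}(S_{2,0})$, of the form
\[
d_{\mathcal{C}_{sep}(S_{2,0})}(\alpha,\beta) \approx \sum_{Y} \{ d_{\mathcal{C}(Y)}(\alpha,\beta) \}_c,
\]
where the sum ranges over the holes $Y$. The lower bound (distance bounds the sum from above, coarsely) is the easier direction: an edge in $\mathcal{C}_{sep}$ moves the curve by bounded intersection number, so each subsurface projection changes by a bounded amount per step, and the Behrstock inequality together with the no-disjoint-holes property controls how many holes can simultaneously accumulate large projection distance. The upper bound is harder and is where I expect to borrow most heavily from hierarchy machinery: given two separating curves with small total projection sum, I want to build a short path between them in $\mathcal{C}_{sep}$. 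The natural tool is Theorem \ref{thm:hierarchy}: one takes markings $\mu,\nu$ containing $\alpha,\beta$ respectively, runs a hierarchy path $\rho(\mu,\nu)$, and tries to extract from the resolution a path of \emph{separating} curves whose length is controlled by the projection data. The subtlety is that the marking-complex distance formula (Theorem \ref{thm:quasidistance}) sums over \emph{all} subsurfaces and annuli, whereas here the sum must be restricted to holes, so I would need to argue that projections to non-holes contribute only boundedly, precisely because the complex projects nontrivially only into holes.

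The main obstacle will be producing, along the hierarchy, a sequence of separating curves that are consecutively close in $\mathcal{C}_{sep}(S_{2,0})$ — that is, that intersect minimally (four times) — rather than merely a sequence of curves or markings. The hierarchy naturally yields base curves along the main geodesic, but these are generally non-separating; I must interpolate separating curves and bound the intersection cost of each interpolation step by a surface-dependent constant, using the component-domain structure (property H2) to keep the transverse data controlled. This is the step where the Masur-Schleimer axioms fail to apply directly and where the promised ``direct argument'' is needed. Finally, once both the no-disjoint-holes property and the quasi-distance formula are in hand, I would conclude $\delta$-hyperbolicity by adapting the Masur-Schleimer hyperbolicity argument: the distance formula together with overlapping holes forces geodesic triangles to be thin, essentially because any two points have their distance concentrated in a coarsely coherent collection of pairwise-overlapping holes, which by a Behrstock-inequality argument cannot spread out enough to create a fat triangle.
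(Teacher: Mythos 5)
Your outline follows the same two-step route as the paper --- (1) a quasi-distance formula summed over holes, (2) feeding that, together with the no-disjoint-holes property, into the Masur--Schleimer Section 20 hyperbolicity argument --- but it has a genuine gap at exactly the point you yourself flag as ``the main obstacle.'' Saying that you ``must interpolate separating curves and bound the intersection cost of each interpolation step'' restates the problem rather than solving it; the entire content of the paper's Step One is the construction that makes this work. Concretely, the paper builds a coarsely well defined map $\phi:\mathcal{M}(S_{2,0})\ra\mathcal{C}_{sep}(S_{2,0})$: if a marking has a separating base curve, take it; otherwise, working in the four-holed sphere $S_{i,j}$ complementary to two of the base curves, exactly one of the transversal $t_{k}$ or the third vertex $o_{k}$ of a Farey triangle on $(\gamma_{k},t_{k})$ is a separating curve of $S$. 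This is Claim \ref{claim:onesep}, proved by a parity argument on Farey slopes (odd/odd, odd/even, even/odd), and it is the genus-two-specific fact that powers everything. One then needs Lemma \ref{lem:natural} (an elementary move changes $\phi$ by a uniformly bounded amount in $\mathcal{C}_{sep}$) and, crucially, the argument of Corollary \ref{cor:upper}: for a non-hole component domain $Y$ of the hierarchy, $\phi$ can be chosen \emph{constant} on the interval $I_{Y}$ (equal to the fixed separating curve $\partial Y$ when $\partial Y$ is separating, or to a fixed separating transversal when $Y$ is an annulus), so the $\mathcal{C}_{sep}$-path stalls while the hierarchy travels arbitrarily far inside $Y$. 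That is precisely why non-hole projections drop out of the sum, which is the subtlety you correctly identify but do not resolve. Note also that no soft argument can fill this in: as the paper points out in its final section, in higher genus there are markings arbitrarily far from any marking containing a separating base or transversal curve, so any successful argument must use genus-two input of exactly this kind.

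A secondary point: your concluding step understates what the Masur--Schleimer hyperbolicity argument consumes. Beyond the distance formula and overlapping holes, it needs quasi-geodesic paths in $\mathcal{C}_{sep}(S_{2,0})$ that fellow-travel hierarchy paths while effectively avoiding non-hole component domains; this is what substitutes for the failed replacement axiom [R2]. In the paper these paths are exactly the $\phi$-images constructed in Corollary \ref{cor:upper}, so the final step is not a generic ``thin triangles from a distance formula'' deduction but leans again on the missing construction. In short, the architecture of your proposal matches the paper's, but the load-bearing lemma --- the marking-to-separating-curve map and its behavior on non-hole domains --- is absent.
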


The proof of Theorem \ref{thm:main} is broken down into two steps.  In the first step we show by a direct argument that $\mathcal{C}_{sep}(S_{2,0})$ has a quasi-distance formula.  In the second step, using step one, we show that the Masur-Schleimer proof for $\delta$-hyperbolicity of a combinatorial complex found in Section 20 of \cite{masurschleimer} applies to $\mathcal{C}_{sep}(S_{2,0})$ despite the fact that not all the Masur-Schleimer axioms hold.  
 
\subsection{Step One: $\mathcal{C}_{sep}(S_{2,0})$ has a quasi-distance formula.}
$\\$
$\indent$
We begin by recalling a lemma of \cite{masurschleimer} which ensures a quasi-lower bound for a quasi-distance formula for $\mathcal{C}_{sep}(S_{2,0}).$  As noted by Masur-Schleimer, the proof of the following lemma follows almost verbatim from the a similar arguments in \cite{MM2} regarding the marking complex:

\begin{lem} \label{lem:lower} Let $S$ be a surface of finite type, and let $\mathcal{G}(S)$ be a combinatorial complex.  There is a constant $C_{0}$ such that $\forall c \geq C_{0}$ there exists quasi-isometry constants such that $\forall \alpha, \beta \in \mathcal{G}(S)$: 
$$\sum_{Y \mbox{ a hole for } \mathcal{G}(S) } \{ d_{\mathcal{C}(Y)}(\alpha,\beta) \}_{c} \lesssim d_{\mathcal{G}(S)}(\alpha,\beta) $$ 
\end{lem}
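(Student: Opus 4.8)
The plan is to prove this lower bound by the standard ``coarse Lipschitz'' argument, adapted from the treatment of the marking complex in \cite{MM2}, the only genuinely new input being the behavior of subsurface projections along edges of $\mathcal{G}(S)$ rather than along elementary marking moves. Fix $\alpha,\beta \in \mathcal{G}(S)$ and a geodesic $\alpha = v_0, v_1, \ldots, v_n = \beta$ in $\mathcal{G}(S)$, so that $n = d_{\mathcal{G}(S)}(\alpha,\beta)$. Since a hole $Y$ is by definition a subsurface into which every vertex of $\mathcal{G}(S)$ projects non-trivially, each $\pi_{\mathcal{C}(Y)}(v_i)$ is defined, and it suffices to bound $\sum_{Y \text{ hole}} \{d_{\mathcal{C}(Y)}(\alpha,\beta)\}_c$ by a uniform multiple of $n$. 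The two inputs are: (i) a coarse Lipschitz estimate, namely a constant $A$ (depending only on $S$, by $\MCG$-equivariance) with $d_{\mathcal{C}(Y)}(v_i, v_{i+1}) \leq A$ for every $i$ and every subsurface $Y$; and (ii) Behrstock's inequality together with the finiteness of antichains of essential subsurfaces.

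Input (i) is where the structure of the combinatorial complex enters. Consecutive vertices $v_i, v_{i+1}$ are joined by an edge, which by the definition of a combinatorial complex means the underlying multicurves have geometric intersection bounded by a constant depending only on $S$; the Lipschitz property of subsurface projection (\cite{MM2}) then bounds $d_{\mathcal{C}(Y)}(v_i, v_{i+1})$ uniformly. Setting $C_0 > 4A$ and taking any threshold $c \geq C_0$, I would assign to each hole $Y$ with $d_{\mathcal{C}(Y)}(\alpha,\beta) \geq c$ the \emph{active set} $J_Y = \{\, i : d_{\mathcal{C}(Y)}(\alpha, v_i) \geq A \text{ and } d_{\mathcal{C}(Y)}(v_i,\beta) \geq A \,\}$. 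Because the sequence $\pi_{\mathcal{C}(Y)}(v_i)$ moves at most $A$ per step yet must traverse projection distance at least $d_{\mathcal{C}(Y)}(\alpha,\beta) - 2A$ as $i$ ranges over $J_Y$, one obtains $|J_Y| \geq (d_{\mathcal{C}(Y)}(\alpha,\beta) - 2A)/A$, which for $c > 4A$ is at least $\{d_{\mathcal{C}(Y)}(\alpha,\beta)\}_c/(2A)$. Summing then gives $\sum_Y \{d_{\mathcal{C}(Y)}(\alpha,\beta)\}_c \lesssim \sum_Y |J_Y| = \sum_{i=0}^{n} |\{\, Y : i \in J_Y \,\}|$.

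The crux, and the step I expect to be the main obstacle, is to bound the number of holes active at a single time, i.e. to establish $|\{Y : i \in J_Y\}| \leq N$ for a constant $N = N(S)$; granting this, the displayed sum is at most $N(n+1) \lesssim n$ and the lemma follows. For holes that are pairwise disjoint or nested the bound is immediate, since any such family has size at most $\xi(S)$. For overlapping holes $Y \pitchfork Z$ that are simultaneously active at time $i$, I would invoke Behrstock's inequality, which forces $\min\bigl(d_{\mathcal{C}(Y)}(\partial Z, v_i),\, d_{\mathcal{C}(Z)}(\partial Y, v_i)\bigr)$ to be uniformly bounded and thereby imposes a coarse time-ordering on the active sets $J_Y$; the delicate point is to combine this ordering with the finiteness of antichains to conclude that only boundedly many holes can contain a given index, exactly as in the marking-complex argument of \cite{MM2}. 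Throughout, $\MCG$-equivariance of $\mathcal{G}(S)$ guarantees that $A$, $N$, and hence the final quasi-isometry constants depend only on the topological type of $S$.
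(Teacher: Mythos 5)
Your proposal is correct and takes essentially the same route as the paper: the paper gives no independent proof of this lemma, recalling it from \cite{masurschleimer} with the remark that it follows almost verbatim from the marking-complex arguments of \cite{MM2}, and your argument (coarse Lipschitz control of projections to holes along edges of $\mathcal{G}(S)$, active intervals $J_Y$ of length $\gtrsim d_{\mathcal{C}(Y)}(\alpha,\beta)$, and Behrstock's inequality plus the boundedness of pairwise non-overlapping families to control multiplicity) is precisely a reconstruction of that Masur--Minsky/Masur--Schleimer argument. The only adjustment needed is cosmetic: the active-set threshold and $C_{0}$ must dominate the Behrstock constant as well as the Lipschitz constant $A$, which is the standard constant-tuning already implicit in the \cite{MM2} argument you defer to.
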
 

In light of Lemma \ref{lem:lower}, in order to obtain a quasi-distance formula for $\mathcal{C}_{sep}(S_{2,0}),$ it suffices to obtain a quasi-upper bound on $\mathcal{C}_{sep}(S_{2,0})$ distance in terms of the sum of subsurface projections to holes.  As motivated by \cite{masurschleimer}, our approach for doing so will be by relating markings to separating curves and more generally marking paths to separating paths.  In the rest of this subsection let $S=S_{2,0}.$

Let $\mu \in \mathcal{M}(S).$  Presently we will define a coarsely well defined mapping $\phi:\mathcal{M}(S) \ra \mathcal{C}_{sep}(S).$  If $base(\mu)$ contains a separating curve $\gamma_{i},$  then we define $\phi(\mu)=\gamma_{i}.$  On the other hand, if all three base curves of $\mu,$ $\gamma_{1},\gamma_{2},\gamma_{3},$ are non-separating curves, then for any $i,j,k \in \{1,2,3\},$ $i\ne j\ne k \ne i,$ denote the essential subsurface $S_{i,j} := S \setminus \gamma_{i}, \gamma_{j} \simeq S_{0,4}.$  Note that $\mathcal{C}(S_{i,j})$ is a Farey graph containing the adjacent curves $\gamma_{k}$ and $t_{k}.$  Let $o_{k}$ be a curve in $S_{i,j}$ such that $\gamma_{k},t_{k}, o_{k}$ form a triangle in $\mathcal{C}(S_{i,j})$.  Note that $o_{k}$ is not uniquely determined by this condition; in fact, there are exactly two possibilities for $o_{k}.$  Nonetheless, the Farey addition property implies that the two possible curves for $o_{k}$ intersect four times and are distance two in $\mathcal{C}(S_{i,j}).$  In this case, assuming none of the base curves are separating curves, we claim that exactly one of $o_{k}$ or $t_{k}$ is a separating curve of $S,$ and define $\phi(\mu)$ to be either $t_{k}$ or $o_{k},$ depending on which one is a separating curve.

\begin{clm} \label{claim:onesep} With the notation from above, let $\gamma_{k},t_{k},o_{k}$ form a triangle in the Farey graph $\mathcal{C}(S_{i,j}).$  Then one (and only one) of the curves $t_{k}$ and $o_{k}$ are separating curves of $S.$ \end{clm}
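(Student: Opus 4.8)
The plan is to reduce the statement to a $\Z/2$-homological classification of the essential curves in $S_{i,j}\simeq S_{0,4}$, combined with the standard parity behavior of Farey-graph vertices. First I would record the topology of $S_{i,j}$. Since $\gamma_i,\gamma_j$ are disjoint non-separating curves whose union does not separate $S$, cutting $S$ along $\gamma_i\cup\gamma_j$ yields a four-holed sphere whose four boundary components are two copies $\gamma_i^{+},\gamma_i^{-}$ of $\gamma_i$ and two copies $\gamma_j^{+},\gamma_j^{-}$ of $\gamma_j$. Recovering $S$ from $S_{i,j}$ is then exactly the regluing $\gamma_i^{+}\sim\gamma_i^{-}$ and $\gamma_j^{+}\sim\gamma_j^{-}$, and the separating-or-not question for a curve $c\subset S_{i,j}$ is entirely controlled by how these two regluings interact with $c$.

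Next I would classify the essential curves of $S_{0,4}$ by the partition they induce on the four boundary components into two unordered pairs. There are exactly three such partition types, and under the identification of $\mathcal{C}(S_{0,4})$ with the Farey graph (from the preliminaries) these three types are detected by the parity $(p\bmod 2,\,q\bmod 2)$ of the associated vertex $\tfrac{p}{q}$, giving the three classes $(0,1),(1,0),(1,1)$. Using the adjacency condition $|ps-qr|=1$, one checks that two adjacent vertices always have distinct parities, so the three mutually adjacent vertices of any Farey triangle realize all three partition types, each exactly once.

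The key step, and the main obstacle, is to determine for a curve $c\subset S_{i,j}$ of each partition type whether $c$ is separating in $S$ after the regluings. Cutting $S_{0,4}$ along $c$ produces two pairs of pants $P_1,P_2$, and $c$ separates $S$ precisely when the two regluings fail to connect $P_1$ to $P_2$. A short component-counting argument shows this occurs exactly for the partition $\{\gamma_i^{+}\gamma_i^{-}\mid\gamma_j^{+}\gamma_j^{-}\}$ that pairs the two copies of $\gamma_i$ together and the two copies of $\gamma_j$ together: there each regluing identifies two boundary components lying on the \emph{same} pair of pants, so $P_1$ and $P_2$ remain distinct components, each becoming a genus-one subsurface with boundary $c$, whence $c$ is separating. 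For either of the other two partitions the gluing $\gamma_i^{+}\sim\gamma_i^{-}$ joins $P_1$ to $P_2$, so $S\setminus c$ is connected and $c$ is non-separating. Thus exactly one of the three partition types yields a separating curve of $S$.

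Finally I would conclude. By hypothesis $\gamma_k$ is a non-separating curve of $S$, so its partition type is \emph{not} the separating one identified above. Since $t_k$ and $o_k$ are each adjacent to $\gamma_k$ in the Farey triangle, and $o_k$ is moreover adjacent to $t_k$, the curves $t_k,o_k$ realize precisely the two partition types distinct from that of $\gamma_k$ (and by the Farey addition property the choice of $o_k$ does not affect its parity type). Exactly one of these two remaining types is the separating one, so exactly one of $t_k,o_k$ is a separating curve of $S$, as claimed.
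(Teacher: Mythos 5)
Your proof is correct and takes essentially the same route as the paper's: classify the essential curves of $S_{i,j}\simeq S_{0,4}$ by the pair-partition they induce on the four boundary components, detect the three partition types via the parity of the Farey vertex $\frac{p}{q}$, and conclude by noting that the three mutually adjacent vertices of a Farey triangle realize all three types exactly once, with $\gamma_{k}$ occupying a non-separating type. The only difference is one of completeness rather than of method: where the paper appeals to unspecified ``topological considerations'' to assert that exactly one parity class consists of separating curves of $S$, you supply the explicit pair-of-pants regluing argument identifying that class as the partition $\set{\gamma_i^{+}\gamma_i^{-}\mid\gamma_j^{+}\gamma_j^{-}}$, which strengthens rather than alters the argument.
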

\begin{proof}
$S_{i,j}$ has four boundary components which glue up in pairs inside the ambient surface $S.$  Moreover, any curve $\alpha \in \mathcal{C}(S_{i,j})$ gives rise to a partition of the four boundary components of $S_{i,j}$ into pairs given by pairing boundary components in the same connected component of $S_{i,j} \setminus \alpha.$  

In total there are ${4 \choose 2} = 3$ different ways to partition the four boundary components of $S_{i,j}$ into pairs, and in fact it is not hard to see that the partition of a boundary components determined by a curve $\frac{p}{q} \in \mathcal{C}(S_{i,j})$ is entirely determined by the parity of $p$ and $q.$  Specifically, the three partitions correspond to the cases (i) $p$ and $q$ are both odd, (ii) $p$ is odd and $q$ is even, and (iii) $p$ is even and $q$ is odd.  By topological considerations, since we are assuming none of the base curves of the marking are separating curves, it follows that all curves  in $\mathcal{C}(S_{i,j})$ corresponding to exactly one of the three cases, (i),(ii) or (iii), are separating curves of the ambient surface $S.$

Hence, in order to prove the claim it suffices to show that any triangle in the Farey graph has exactly one vertex from each of the three cases (i), (ii) and (iii).  This follows from basic arithmetic computation:  First note that no two vertices from a single case are adjacent in the Farey graph.  For example a vertex of type (odd/odd) cannot be adjancent to another vertex of type (odd/odd) as the adjacency condition fails, namely $$|odd^{2}-odd^{2}| = |odd' - odd'| = even \ne 1.$$  Similar calculations show that two vertices of type (odd/even) or two vertices of type (even/odd) cannot be adjacent to each other.  Moreover, the Farey addition property implies that if a triangle contains vertices of two of the different cases, then the third vertex in any such triangle perforce corresponds to the third case.  For example if a triangle has vertices of type (odd/odd) and (odd/even), the Farey addition property implies that the third vertex in any such triangle will be of type (even/odd).  The claim follows.
\end{proof}
  
The following theorem ensures that the mapping $\phi:\mathcal{M}(S) \ra \mathcal{C}_{sep}(S)$ is coarsely well defined.  
\begin{thm}
Using the notation from above, let $\mu$ be a marking with no separating base curves, and let $t_{i},t_{j}$ be transversals which are separating curves.  Then $t_{i}$ and $t_{j}$ are connected in the separating curve complex $\mathcal{C}_{sep}(S).$  Similarly, if $t_{i}$ and $o_{j}$, or $o_{i}$ and $o_{j}$ are separating curves the same result holds.
\end{thm}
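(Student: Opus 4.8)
The plan is to show that any two separating curves arising from the construction $\phi$ on a fixed marking $\mu$ with no separating base curves are within bounded distance in $\mathcal{C}_{sep}(S)$, and in fact connected by an edge (intersecting four times). The statement treats three cases ($t_i$ vs.\ $t_j$; $t_i$ vs.\ $o_j$; $o_i$ vs.\ $o_j$), but I expect all three to reduce to a single intersection-number computation, since $o_k$ and $t_k$ differ only by the Farey addition recipe inside the Farey graph $\mathcal{C}(S_{i,j})$, and we already know from Claim \ref{claim:onesep} that each candidate curve lives in one of the three parity classes (i), (ii), (iii) on the four boundary components of $S_{i,j}$.

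\emph{First} I would set up the combinatorial model explicitly. Recall that for $S=S_{2,0}$ two separating curves are adjacent in $\mathcal{C}_{sep}(S)$ precisely when they intersect four times. So the entire theorem is the assertion that the two separating curves in question have geometric intersection number exactly four. The key observation is that the transversals $t_i,t_j$ and the auxiliary curves $o_i,o_j$ all sit inside the marking $\mu$, whose base curves $\gamma_1,\gamma_2,\gamma_3$ pairwise intersect once (they form a simplex in $\mathcal{C}(S)$ in the relaxed sense), and whose transversals satisfy the cleanliness and minimal-intersection conditions. Since $t_i$ is disjoint from $\gamma_j$ for $j\ne i$ and meets $\gamma_i$ once, and similarly $o_i$ is built from $\gamma_i,t_i$ via Farey addition inside $S_{j,k}=S\setminus\{\gamma_j,\gamma_k\}\simeq S_{0,4}$, one can read off the intersection pattern of any two of these curves directly from their coordinates in the relevant Farey graph(s).

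\emph{The core step} is the intersection count. I would fix a concrete genus-two model — say, represent $\gamma_1,\gamma_2,\gamma_3$ as the three standard non-separating curves whose pairwise intersection is one, with $S_{i,j}\simeq S_{0,4}$ identified with the Farey graph as in the final subsection of the preliminaries (meridian $\mapsto 1/0$, longitude $\mapsto 0/1$). In these coordinates each of $t_k,o_k$ is a specific slope, and geometric intersection number of two slopes $p/q$, $r/s$ on the four-holed sphere is the standard $2|ps-qr|$. Because $t_k,o_k,\gamma_k$ form a Farey triangle, consecutive slopes differ by $1$ in the determinant, and the parity analysis of Claim \ref{claim:onesep} pins down which class each separating curve occupies. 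Carrying out the determinant computation for each of the three cases should yield $|ps-qr|=2$, hence intersection number four, exactly the adjacency condition.

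\emph{The main obstacle} I anticipate is bookkeeping rather than conceptual difficulty: the two separating curves $t_i$ and $t_j$ live a priori in two \emph{different} four-holed spheres, $S_{j,k}$ and $S_{i,k}$, so their intersection number is not simply a Farey-graph computation in one copy of $S_{0,4}$ — one must pass to the ambient surface $S$ and count intersections there, tracking how the two subsurface decompositions overlap. To handle this I would either (a) lift everything to a common picture in $S$ using the fixed genus-two model and count intersections by hand, exploiting the symmetry of the construction under permuting $\{i,j,k\}$ to reduce the three cases to one; or (b) argue that since $t_i$ is disjoint from $\gamma_k$ and $t_j$ is disjoint from $\gamma_k$ as well, both separating curves are supported in $S\setminus\gamma_k\simeq S_{1,2}$, so the computation can be localized and the intersection number bounded. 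Verifying that this localization gives exactly (and not merely at most) four intersections, and confirming the parity classes are compatible across the different $S_{i,j}$, will be the delicate point of the argument.
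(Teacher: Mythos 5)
Your overall strategy is the same as the paper's: reduce the theorem to the claim that the two separating curves intersect exactly four times (the adjacency condition in $\mathcal{C}_{sep}(S_{2,0})$), use the fact that up to the mapping class group action there is a unique picture of a marking on $S_{2,0}$ with no separating base curve, read off the transversals as slopes adjacent to $\frac{1}{0}$ in the relevant Farey graphs, and finish with an explicit intersection count in the ambient surface. The paper does that last step by exhibiting representatives in Figure \ref{fig:genus2}; your option (a) is exactly this. The difficulty you flag --- that $t_i$ and $t_j$ live in \emph{different} four-holed spheres, so no single Farey determinant computes their intersection number --- is real, and it is precisely the step the paper resolves by drawing the picture rather than by a determinant formula.

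However, your setup contains a genuine error that would derail the concrete model you propose to compute in. You assert that the base curves $\gamma_1,\gamma_2,\gamma_3$ ``pairwise intersect once (they form a simplex in $\mathcal{C}(S)$ in the relaxed sense),'' and you propose to build the genus-two model with three pairwise once-intersecting curves. This is wrong: the relaxed adjacency convention applies only to low-complexity surfaces such as $S_{0,4}$ and $S_{1,1}$ (and to $\mathcal{C}_{sep}(S_{2,0})$ itself), not to $\mathcal{C}(S_{2,0})$, which has complexity $3$. The base of a complete marking on $S_{2,0}$ is a maximal simplex of pairwise \emph{disjoint} curves, i.e.\ a pants decomposition --- here, three disjoint non-separating curves cutting $S$ into two pairs of pants. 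This disjointness is not incidental: $S_{i,j}=S\setminus\{\gamma_i,\gamma_j\}$ is a four-holed sphere, so that $\mathcal{C}(S_{i,j})$ is a Farey graph containing $\gamma_k$ and $t_k$, precisely because $\gamma_i$ and $\gamma_j$ are disjoint. If $\gamma_i$ and $\gamma_j$ intersected once, a regular neighborhood of $\gamma_i\cup\gamma_j$ would be a one-holed torus and its complement would be $S_{1,1}$, not $S_{0,4}$; the Farey coordinates, the parity classes of Claim \ref{claim:onesep}, and your determinant computation would then have nothing to attach to. With the corrected (disjoint) model your plan does go through and essentially reproduces the paper's argument. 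Your localization idea (b) is sound as far as it goes --- by cleanliness all of $t_i,o_i,t_j,o_j$ are disjoint from $\gamma_k$, hence lie in $S\setminus\gamma_k\simeq S_{1,2}$ --- but, as you yourself note, it only bounds the intersection number; the exact count of four still requires an explicit construction of representatives, which is the step neither variant of your proposal actually completes and which the paper supplies via its figure.
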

\begin{proof}
We will prove the first case; the similar statement follows from the same proof.  Specifically, we will show that the separating curves $t_{i},t_{j}$ intersect four times.  Up to action of $\MCG,$ there is only one picture for a marking $\mu$ which does not contain a separating base curve, as presented in Figure \ref{fig:genus2}.  Without loss of generality we can assume $t_{i}=t_{1}$ and $t_{j}=t_{2}$.  Notice that in the subsurface $S_{2,3},$ as in Figure \ref{fig:genus2}, the base curve $\gamma_{1}$ corresponds to the meridional curve $\frac{1}{0},$ and similarly in the subsurface $S_{1,3}$ the base curve $\gamma_{2}$ also corresponds to the meridional curve $\frac{1}{0}.$  Since $t_{1}$ is connected to $\gamma_{1}$ in the Farey graph $\mathcal{C}(S_{2,3})$ it follows that $t_{1} \in \mathcal{C}(S_{2,3})$ is a curve of the form $\frac{n}{1}$ for some integer $n.$  Similarly,  $t_{2} \in \mathcal{C}(S_{1,3})$ is a curve of the form $\frac{m}{1}$ for some integer $m.$  As in the examples in Figure \ref{fig:genus2} it is easy to draw representatives of the two curves which intersect four times.         
\end{proof}

\begin{figure}[htpb]
\centering
\includegraphics[height=4 cm]{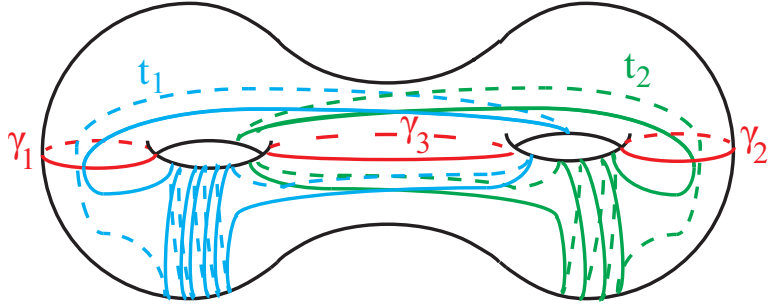}
\caption{A marking $\mu$ on $S=S_{2,0}$ with no separating curves, with transversal curves $t_{1},$ $t_{2}$ separating curves.  Notice that $d_{\mathcal{C}_{sep}(S)}(t_{1},t_{2}) =1.$}\label{fig:genus2}
\end{figure}
   
The following lemma says that our coarsely well defined mapping $\phi$ which associates a separating curve to a complete clean marking is natural with respect to elementary moves in the marking complex.
\begin{lem} \label{lem:natural} If $d_{\mathcal{M}(S)}(\mu,\mu')\leq 1$ then $d_{\mathcal{C}_{sep}(S)}(\phi(\mu),\phi(\mu'))\leq 2.$
\end{lem}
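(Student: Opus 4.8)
The plan is to prove Lemma \ref{lem:natural} by a case analysis over the two types of elementary moves in $\mathcal{M}(S)$, namely twists and flips, since $d_{\mathcal{M}(S)}(\mu,\mu')\leq 1$ means $\mu'$ is obtained from $\mu$ by a single such move (or $\mu=\mu'$, which is trivial). First I would observe that the mapping $\phi$ depends only on a limited amount of data in the marking: if some base curve is already separating, $\phi$ returns it, and otherwise $\phi$ returns whichever of $t_k$ or $o_k$ is separating, as guaranteed by Claim \ref{claim:onesep}. So the strategy is to track how this data changes under each elementary move and bound the resulting change in $\phi$ by $2$ in $\mathcal{C}_{sep}(S)$.

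First I would handle the twist move. A twist replaces some transversal $t_i$ by $t_i'$, a full or half twist of $t_i$ around the base curve $\gamma_i$, leaving all base curves fixed. If $\mu$ already has a separating base curve, then so does $\mu'$ (the base curves are unchanged) and $\phi$ is unaffected or changes among separating base curves that remain disjoint, giving distance $0$. In the no-separating-base-curve case, I would use the identification of $\mathcal{C}(S_{i,j})$ with the Farey graph: a twist moves $t_i$ to an adjacent vertex along the horocyclic family of curves sharing a triangle with $\gamma_i$, so $\phi(\mu)$ and $\phi(\mu')$ are controlled by nearby Farey vertices. Using the Farey addition property exactly as in the proof of Claim \ref{claim:onesep}, I would show the relevant separating curves before and after the twist either coincide or intersect minimally (four times), hence are at distance at most $1$.

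The flip move is where the main obstacle lies. A flip interchanges $\gamma_i$ and $t_i$ and may force a cleanup to a compatible marking, so the base curves genuinely change, and therefore the subsurfaces $S_{i,j}$ used to define $\phi$ change as well. This means I cannot simply compare positions inside one fixed Farey graph; instead I would need to compare the separating curve $\phi(\mu)$, computed from the original base curves, with $\phi(\mu')$, computed from the flipped base curves. The key point to establish is that these two separating curves have bounded intersection number, which again by the definition of adjacency in $\mathcal{C}_{sep}(S_{2,0})$ (distance $1$ iff intersection number four) translates into a distance bound. I expect this requires a careful topological picture, working up to the $\MCG$ action where, as in the preceding theorem, there is essentially one configuration (Figure \ref{fig:genus2}) for a marking with no separating base curves. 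The factor of $2$ in the statement presumably absorbs the case where $\phi(\mu)$ and $\phi(\mu')$ are not directly adjacent but share a common neighbor, so I would allow for an intermediate separating curve and bound the total by $2$.

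To carry this out cleanly, I would first reduce to the generic no-separating-base-curve case on both ends, since whenever a separating base curve is present $\phi$ is essentially pinned down and the flip can at most introduce or remove one such curve. Then, exploiting the $\MCG$-rigidity of the marking picture and the Farey-graph model of the relevant four-holed sphere subsurfaces, I would compute intersection numbers of the candidate separating curves explicitly in coordinates of the form $\frac{n}{1}$, just as in the proof immediately preceding this lemma, and verify the bound of four (hence $\mathcal{C}_{sep}(S)$-distance $1$) or a two-step chain through an intermediate separating curve (distance $2$). The hardest step is confirming that the cleanup procedure built into the flip move does not introduce uncontrolled changes in $\phi$; I would rely on the uniform bound from \cite{MM2} on the number of clean markings compatible with a given complete marking to ensure only finitely many, $\MCG$-equivalent, configurations arise, making the bound uniform.
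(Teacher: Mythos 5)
Your overall skeleton---a case analysis over twist and flip moves, the Farey-graph model of $\mathcal{C}(S_{i,j})$, Claim \ref{claim:onesep}, and intersection number four as the adjacency criterion in $\mathcal{C}_{sep}(S_{2,0})$---matches the paper, and your treatment of the twist case, while more roundabout than necessary, does work. The genuine gap is in the flip case, which you yourself call the main obstacle: there your write-up is a plan rather than a proof. ``I would compute intersection numbers of the candidate separating curves explicitly \dots and verify the bound of four \dots or a two-step chain'' asserts the outcome of a computation that is never performed, and your proposed handling of the cleanup step rests on the finiteness of clean markings compatible with a given complete marking, which by itself yields no distance bound: finitely many configurations up to the mapping class group action is a true observation, but it does nothing until you actually verify the bound in each configuration. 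So the decisive step of the lemma is exactly the part that is deferred.

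The missing idea, which the paper uses and which makes all computation unnecessary, is that after a single elementary move one can always choose the \emph{same} curve as the value of $\phi$ for both markings. For a twist at $(\gamma_i,t_i)$: either there is a (common) separating base curve, or one uses an untouched pair $(\gamma_j,t_j)$, $j\ne i$, whose candidates $t_j, o_j$ are identical for $\mu$ and $\mu'$---no tracking in the Farey graph is needed. For a flip at $(\gamma_i,t_i)$: if $\gamma_j$ or $\gamma_k$ is separating, it is a base curve of both markings; if $\gamma_i$ is separating, it survives in $\mu'$ as the transversal to the new base curve $t_i$, hence is still an admissible value of $\phi(\mu')$; and if no base curve is separating, one again selects a common candidate (the paper takes $t_j$ or $o_j$; note also that the flipped pair spans the same triangle $\{\gamma_i,t_i,o_i\}$ in the unchanged Farey graph $\mathcal{C}(S_{j,k})$, so its separating member serves for both markings). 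With such choices the distance is $0$, and the constant $2$ in the statement arises purely from coarse well-definedness---by the theorem preceding the lemma, any two admissible representatives of $\phi$ at a single marking are adjacent---not, as you suggest, from a ``common neighbor'' phenomenon inside an intersection computation. Your route could probably be completed by brute force, but as written the core verification is absent, and the cleanup issue you flag as hardest simply dissolves under the common-representative argument.
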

\begin{rem}
To be sure, as will be evident in the proof of the Lemma \ref{lem:natural}, up to choosing appropriate representatives of $\phi(\mu)$ and $\phi(\mu')$ it is in fact true that $d_{\mathcal{C}_{sep}(S)}(\phi(\mu),\phi(\mu'))\leq 1.$  However, the statement of the lemma holds for any representatives of $\phi(\mu)$ and $\phi(\mu').$ 
\end{rem}

\begin{proof}
The proof will proceed by considering cases.  First assume $\mu$ and $\mu'$ differ by a twist move applied to the pair $(\gamma_{i},t_{i})$.  If $\mu$ has a separating base curve, and hence so does $\mu'$ as twists do not affect base curves, then we are done as $\phi$ associates to both markings this common separating base curve.  On the other hand, if $\mu$ has no separating base curves, and hence neither does $\mu',$ we can let $\phi$ assign to both markings the same separating curve either $t_{j}$ or $o_{j},$ for $i\ne j,$ depending on which one is a separating curve.  In either case we are done.

Next assume $\mu$ and $\mu'$ differ by a flip move applied to the pair $(\gamma_{i},t_{i})$.  Recall that after the flip move is performed one must pass to a compatible clean marking.  Let us consider the situation more carefully.  Specifically, assume $\mu=\{(\gamma_{i},t_{i}), (\gamma_{j},t_{j}), (\gamma_{k},t_{k})\}.$  Then $\mu'=\{(t_{i},\gamma_{i}), (\gamma_{j},t'_{j}), (\gamma_{k},t'_{k})\},$ where the transversals $t'_{j},t'_{k}$ are obtained by passing to a compatible clean marking if necessary.  If $\gamma_{j}$ or $\gamma_{k}$ is a separating base curve we are done.  If not, then if $\gamma_{i}$ is a separating curve we are similarly done as $\phi$ can be chosen to assign to both markings the separating curve $\gamma_{i}$.  Finally, if none of the base curves are separating curves, then we also done as we can choose $\phi$ to assign to both markings the same separating curve either $t_{j}$ or $o_{j},$ depending on which one is a separating curve. 
\end{proof}

Combining the existence of well defined mapping $\phi:\mathcal{M}(S) \ra \mathcal{C}_{sep}(S)$ with the result of Lemma \ref{lem:natural}, we have the following procedure for finding a path between any two separating curves.  Given $\alpha,\beta \in \mathcal{C}_{sep}(S),$ complete the separating curves into complete clean markings $\mu$ and $\nu$ such that $\alpha \in base(\mu)$ and $\beta \in base(\nu).$  Then construct a hierarchy path $\rho$ in $\mathcal{M}(S)$ between $\mu$ and $\nu.$  Applying the mapping $\phi$ to our hierarchy path $\rho,$ and interpolating as necessary, yields a path in $\mathcal{C}_{sep}(S)$ between the separating curves $\alpha$ and $\beta$ with length quasi-bounded above by the length of the marking path $\rho.$  In fact, if we are careful we can obtain the following corollary which provides a quasi-upper bound on $\mathcal{C}_{sep}(S_{2,0})$ distance in terms of the sum of subsurface projection to holes.  Note that together with Lemma \ref{lem:lower}, the corollary gives a quasi-distance formula for $\mathcal{C}_{sep}(S),$ thus completing step one.  

\begin{cor} \label{cor:upper}
For $S=S_{2,0},$ there is a constant $K_{0}$ such that $\forall k \geq K_{0}$ there exists quasi-isometry constants such that $\forall \alpha, \beta \in \mathcal{C}_{sep}(S)$: 
$$d_{\mathcal{C}_{sep}(S)}(\alpha,\beta)  \lesssim \sum_{Y \mbox{ a hole for } \mathcal{C}_{sep}(S) } \{ d_{\mathcal{C}(Y)}(\alpha,\beta) \}_{k} $$ 
\end{cor}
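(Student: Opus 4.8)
The plan is to upgrade the path-construction procedure described immediately before the corollary into a genuine quasi-upper bound on $d_{\mathcal{C}_{sep}(S)}(\alpha,\beta)$ by the sum of thresholded hole-projections. First I would complete $\alpha,\beta \in \mathcal{C}_{sep}(S)$ into complete clean markings $\mu,\nu$ with $\alpha \in base(\mu)$, $\beta \in base(\nu)$; by Theorem \ref{thm:quasidistance}, $d_{\mathcal{M}(S)}(\mu,\nu) \approx \sum_{Y \subseteq S}\{d_{\mathcal{C}(Y)}(base(\mu),base(\nu))\}_c$, where the sum ranges over \emph{all} essential subsurfaces and annuli. Then, applying $\phi$ along a hierarchy path $\rho=\rho(\mu,\nu)$ and interpolating (each elementary move costs at most $2$ in $\mathcal{C}_{sep}(S)$ by Lemma \ref{lem:natural}), I obtain the chain
$$
d_{\mathcal{C}_{sep}(S)}(\alpha,\beta) \;\lesssim\; d_{\mathcal{M}(S)}(\mu,\nu) \;\approx\; \sum_{Y\subseteq S}\{d_{\mathcal{C}(Y)}(\mu,\nu)\}_c.
$$
The issue is that the right-hand side sums over all subsurfaces, whereas the corollary requires the sum only over \emph{holes} for $\mathcal{C}_{sep}(S)$. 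So the real content is to discard the non-hole terms, i.e.\ to show that the projections to subsurfaces that are \emph{not} holes contribute nothing essential.

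The key step is therefore to identify the holes of $\mathcal{C}_{sep}(S_{2,0})$ and to control the non-hole terms. A subsurface $Y$ is a hole exactly when every separating curve projects nontrivially into it; I would check that for $S_{2,0}$ the only subsurfaces admitting a separating curve disjoint from them are the small-complexity pieces (the one-holed tori $S_{1,1}$ and four-holed sphere $S_{0,4}$ arising as $S\setminus\gamma$ for a non-separating $\gamma$, together with the annuli around non-separating curves), and that all remaining subsurfaces are holes. For a \emph{non-hole} $Y$ there is a separating curve $\sigma$ disjoint from $Y$; since $\alpha$ and $\beta$ can each be joined to such a disjoint $\sigma$ by a short path in $\mathcal{C}_{sep}(S)$, a large projection distance $d_{\mathcal{C}(Y)}(\alpha,\beta)$ would force a corresponding lower bound on $d_{\mathcal{C}_{sep}(S)}(\alpha,\beta)$ by Lemma \ref{lem:lower} only through holes — so I must instead argue directly that such projections are already \emph{bounded}, or else absorbed. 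Concretely, I expect that the non-hole contributions are controlled by the hole contributions: a Behrstock-type inequality forces that whenever a non-hole $Y$ has large projection, some overlapping hole $W$ also has large projection, so the non-hole terms are dominated (up to additive/multiplicative constants) by $\sum_{W \text{ a hole}}\{d_{\mathcal{C}(W)}(\alpha,\beta)\}_k$.

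Assembling these gives
$$
d_{\mathcal{C}_{sep}(S)}(\alpha,\beta) \;\lesssim\; \sum_{Y\subseteq S}\{d_{\mathcal{C}(Y)}(\alpha,\beta)\}_c \;\lesssim\; \sum_{W\text{ a hole}}\{d_{\mathcal{C}(W)}(\alpha,\beta)\}_k,
$$
after raising the threshold from $c$ to a suitable $k\geq K_0$ so that the finitely many bounded non-hole terms drop out of the thresholded sum. I would also need to check that passing from $d_{\mathcal{C}(Y)}(base(\mu),base(\nu))$ to $d_{\mathcal{C}(Y)}(\alpha,\beta)$ only changes things by a bounded amount (the marking completions $\mu,\nu$ can be chosen so their base curves lie a bounded distance from $\alpha,\beta$ in every relevant projection, using the coarse Lipschitz property of subsurface projection).

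\textbf{Main obstacle.} The hard part will be the control of the non-hole subsurfaces. In higher-genus closed surfaces disjoint holes exist and this control fails (hence the quasi-flats mentioned in the introduction); the special feature of $S_{2,0}$ is precisely that no two holes are disjoint, and I expect the whole argument to hinge on making that geometric fact quantitative. Establishing that large projection to a non-hole $S_{1,1}$, $S_{0,4}$, or annulus is always accompanied by comparably large projection to some genuine hole — with uniform constants, and carefully handling the annular (twisting) projections, which are the most delicate — is where the real work lies, and it is the step most likely to require the direct, surface-specific analysis the introduction promises in place of the full Masur--Schleimer axioms.
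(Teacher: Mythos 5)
Your proposal follows the paper's construction up to the chain $d_{\mathcal{C}_{sep}(S)}(\alpha,\beta) \lesssim d_{\mathcal{M}(S)}(\mu,\nu) \approx \sum_{Y\subseteq S}\{d_{\mathcal{C}(Y)}(\mu,\nu)\}_c$, but the step you yourself flag as the ``real content'' --- discarding the non-hole terms via a Behrstock-type domination --- is false, and no argument of that shape can work. Take $\gamma$ a non-separating curve, $\sigma$ a separating curve disjoint from $\gamma$, $\alpha$ a separating curve crossing $\gamma$, and $\beta_n = T_\gamma^n(\alpha)$, where $T_\gamma$ is the Dehn twist about $\gamma$. The projection of the pair $(\alpha,\beta_n)$ to the annulus about $\gamma$ (a non-hole, since $\sigma$ misses it) grows linearly in $n$; yet since $T_\gamma$ fixes $\sigma$ and acts isometrically on $\mathcal{C}_{sep}(S)$,
$$d_{\mathcal{C}_{sep}(S)}(\alpha,\beta_n) \;\leq\; d_{\mathcal{C}_{sep}(S)}(\alpha,\sigma)+d_{\mathcal{C}_{sep}(S)}(\sigma,T_\gamma^n\alpha) \;=\; 2\,d_{\mathcal{C}_{sep}(S)}(\alpha,\sigma),$$
which is bounded independently of $n$, and then Lemma \ref{lem:lower} forces \emph{every} hole projection of $(\alpha,\beta_n)$ to be uniformly bounded. (The same phenomenon occurs with a partial pseudo-Anosov supported on a one-holed torus cut off by a separating curve.) So a non-hole can have arbitrarily large projection while all holes have bounded projection: the non-hole terms are neither bounded nor absorbed, and in this example $d_{\mathcal{M}(S)}(\mu,\nu)\gtrsim n$ while the hole-sum is $O(1)$, showing that your first inequality, though true, is already too lossy to ever yield the corollary.

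What the paper does instead is not to discard non-hole terms after bounding by marking distance, but to build a better path: it exploits the freedom in the coarsely defined map $\phi$ so that along each interval $I_Y$ (from property [H2] of Theorem \ref{thm:hierarchy}) during which the hierarchy works inside a non-hole component domain $Y$, the image $\phi(I_Y)$ has uniformly bounded diameter in $\mathcal{C}_{sep}(S)$. Concretely, if $Y$ is an annulus, the twisting fixes either a separating base curve or the relevant $t_j$ or $o_j$ with $j\neq i$, and one uses that fixed separating curve as the value of $\phi$ throughout $I_Y$; if $Y$ has separating boundary, every marking in $I_Y$ contains the base curve $\partial Y$ and one sets $\phi \equiv \partial Y$ on $I_Y$. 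The resulting path in $\mathcal{C}_{sep}(S)$ accrues length only while the hierarchy traverses hole domains, which is exactly what produces the sum over holes alone. Your proposal is missing this re-routing idea, and the domination claim you propose in its place is precisely the point where the geometry of $\mathcal{C}_{sep}(S_{2,0})$ differs from the marking complex: non-hole projections are genuinely invisible to $\mathcal{C}_{sep}$-distance, not controlled by it.
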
  

\begin{proof}  As noted, we have a quasi-upper bound on $\mathcal{C}_{sep}(S)$ distance given by the length any hierarchy path $\rho$ connecting markings containing the given separating curves as base curves.  In other words, we have already have a quasi-upper bound of the form: $$d_{\mathcal{C}_{sep}(S)}(\alpha,\beta)  \lesssim \sum_{\xi(Y) \geq 1, \; \mbox{ or } Y \mbox{ an annulus} } \{ d_{\mathcal{C}(Y)}(\alpha,\beta) \}_{k} $$  Hence, it suffices to show that for all components domains $Y$ in the above sum which are not holes of $\mathcal{C}_{sep}(S),$ we can choose can choose our mapping $\phi$ such that the $\mathcal{C}_{sep}(S)$ diameter of $\phi(I_{Y})$ is uniformly bounded, where $I_{Y}$ is as in property [H2] of Theorem \ref{thm:hierarchy}. 

Holes for $\mathcal{C}_{sep}(S)$ consist of all essential subsurfaces with complexity at least one except for subsurfaces whose boundary is a separating curve of the surface.  Hence we must show that for all component domains $Y$ which are either annuli or proper subsurfaces with boundary component a separating curve of the surface that the $\mathcal{C}_{sep}(S)$ diameter of $\phi(I_{Y})$ is uniformly bounded.  First consider the case of $Y$ an annulus.  In this case, the subpath of $\rho$ in the marking complex corresponding to $I_{Y}$ is acting by twist moves on a fixed base curve $\gamma_{i}$.  As in the proof of Lemma \ref{lem:natural}, if there is a separating base curve in the marking, then we are done as the base curves are fixed by the twisting and we can pick the fixed base curve as our separating curve for all of $\phi(I_{Y}).$  Otherwise, if none of the base curves are separating then for $i\ne j$ we pick a $t_{j}$ or $o_{j}$, depending on which is a separating curve, as our representative for all of $\phi(I_{Y}).$  Next consider the case of $Y$ a proper essential subsurface with boundary a separating curve of the surface.  Since every marking in $I_{Y}$ contains the separating curve $\partial Y,$ the desired result follows as we set all of $\phi(I_{Y})$ to be equal to the fixed separating curve $\partial Y.$  
\end{proof}

\subsection{Step Two: $\mathcal{C}_{sep}(S_{2,0})$ is $\delta$-hyperbolic.}
$\\$
$\indent$
In Section 13 of \cite{masurschleimer}, sufficient axioms are established for implying a combinatorial complex admits a quasi-distance formula and furthermore is $\delta$-hyperbolic.  The first axiom is that no two holes for the combinatorial complex are disjoint.  This is easily verified for $\mathcal{C}_{sep}(S_{2,0}).$  The rest of the axioms are related to the existence of an appropriate marking path $\{ \mu_{i} \}_{i=0}^{N} \subset \mathcal{M}(S)$ and a corresponding well suited combinatorial path $\{ \gamma_{i}\}_{i=0}^{K} \subset \mathcal{G}(S).$   In particular, there is a strictly increasing reindexing function $r:[0,K] \ra [0,N]$ with $r(0)=0$ and $r(K)=N.$  In the event that one uses a hierarchy as a marking path, the rest of the axioms can be simplified to the following:
\begin{enumerate}
\item (Combinatorial) There is a constant $C_{2}$ such that for all $i,$ $d_{\mathcal{C}(Y)}(\gamma_{i},\mu_{r(i)})<C_{2}$ for every hole $Y,$ and moreover $d_{\mathcal{G}(S)}(\gamma_{i},\gamma_{i+1})<C_{2}.$
\item (Replacement) There is a constant $C_{4}$ such that:
\subitem[R1] If $Y$ is a hole and $r(i) \in I_{Y},$ then there is a vertex $\gamma' \in \mathcal{G}(S)$ with $\gamma'  \subset Y$ \subitem \;\;\;\; and $d_{\mathcal{G}(S)}(\gamma,\gamma')<C_{4}.$ 
\subitem[R2] If $Y$ is a non-hole and $r(i) \in I_{Y},$ then there is a vertex $\gamma' \in \mathcal{G}(S)$ with $\gamma' \subset Y$ or \subitem \;\;\;\; $\gamma' \subset S\setminus Y$ and $d_{\mathcal{G}(S)}(\gamma,\gamma')<C_{4}.$ 
\item (Straight) There exist constants such that for any subinterval $[p,q] \subset [0,K]$ with the property that $d_{\mathcal{C}(Y)}(\mu_{r(p)},\mu_{r(q)})$ is uniformly bounded for all non-holes, then $d_{\mathcal{G}(S)}(\gamma_{p},\gamma_{q}) \lesssim d_{\mathcal{C}(S)}(\gamma_{p},\gamma_{q}).$
\end{enumerate}
$\indent$ Presently we will show that in the case of the separating curve complex $\mathcal{C}_{sep}(S_{2,0})$ all of above axioms with the exception of axiom [R2] hold.  Let $\rho=\{ \mu_{i} \}_{i=0}^{N}$ be a hierarchy path between two complete clean markings each containing a separating base curve.  Then define the combinatorial path $\{ \gamma_{i}\}_{i=0}^{K} \subset \mathcal{C}_{sep}(S)$ by interpolating between the elements of $\phi(\rho)$ subject to making choices for images of the coarsely well defined mapping $\phi$ such that for component domains of $\rho$ which are not holes of $\mathcal{C}_{sep}(S_{2,0}),$ the $\mathcal{C}_{sep}(S)$ diameter of $\phi(I_{Y})$ is uniformly bounded.  This is precisely what was proven to be possible in Corollary \ref{cor:upper}.  In other words, we can assume the combinatorial path is a quasi-geodesic in the separating curve complex obtained from considering the mapping $\phi$ applied to a hierarchy path $\rho$ and with representative chosen in a manner such that as the hierarchy path potentially travels for an arbitrary distance in a non-hole component domain, the combinatorial path in the separating curve complex only travels a uniformly bounded distance.  Let the reindexing function $r$ be given by sending an element $\gamma_{i}$ of the combinatorial path to any marking $\mu_{j}$ such that $\phi(\mu_{j})=\gamma_{i}.$

Given this setting, the combinatorial axiom is immediate from the definition of $\phi$ in conjunction with Lemma \ref{lem:natural}.  Similarly, the straight axiom follows from the properties of hierarchy paths of Theorem \ref{thm:hierarchy} in conjunction with the construction of the combinatorial path.  Replacement axiom [R1] also holds for if $Y$ is a hole, then $\partial Y$ contains at most two non-separating curves.  Then for all markings $\mu \in I_{Y},$ $base(\mu)$ contains the at most two non-separating curves $\partial Y.$  Let $\gamma_{i}$ be a base curve of $\mu$ not in $\partial Y.$  Then we can choose $\phi(\mu)$ to be either $\gamma_{i},$ $t_{i}$, or $o_{i}$, depending on which is a separating curve, all of which are properly contained in the subsurface $Y.$   Claim \ref{claim:onesep} ensures that exactly one of the three curves $\gamma_{i},$ $t_{i}$, and $o_{i}$ is a separating curve.  On the other hand, axiom [R2] fails as if $Y$ is an essential subsurface which is a non-hole then it is possible that $\partial Y \in \mathcal{C}_{sep}(S).$  In this case, by elementary topological considerations there cannot exist any separating curve properly contained in either $Y$ or $S \setminus Y.$

Nonetheless, while the Masur-Schleimer axioms fail due to the failure of axiom [R2], Masur and Schleimers' proof that a combinatorial complex satisfying the axioms is $\delta$-hyperbolic carries through in the case of $\mathcal{C}_{sep}(S_{2,0}).$  Specifically, consideration of the argument in Section 20 of \cite{masurschleimer}, where Masur and Schleimer prove that a combinatorial complex satisfying their axioms is $\delta$-hyperbolic, reveals the only properties necessary to prove $\delta$-hyperbolcity are that no two holes are disjoint (which holds for $\mathcal{C}_{sep}(S_{2,0}),$ a quasi-distance formula, and the existence of quasi-geodesic combinatorial paths fellow traveling hierarchy paths - in terms of the curve complex of the surface as well the curve complexes of component domain subsurfaces- while effectively avoiding non-hole component domains of the hierarchy path.  However, for the case of $\mathcal{C}_{sep}(S_{2,0}),$ despite the technical failure of the replacement axiom [R2], in step one, and in particular in Corollary \ref{cor:upper}, we have directly proven all of these necessary facts.  Thus, the proof that $\mathcal{C}_{sep}(S_{2,0})$ is $\delta$-hyperbolic follows from the argument in Section 20 of \cite{masurschleimer}, thereby completing the proof of Theorem \ref{thm:main}.   \qed

 \section{Additional Remarks} 
 \subsection{ An alternative proof}
Let $\mathcal{G}(S)$ be a combinatorial complex, and let $\gamma \subset \mathcal{G}(S)$ be a quasi-geodesic.  $\gamma$ has \emph{$(a,b,c)$-contraction}, if there exists positive constants $a,b,c$ such that $d_{\mathcal{G}(S)}(\alpha,\gamma) \geq a, \; d_{\mathcal{G}(S)}(\alpha,\beta) \leq b d_{\mathcal{G}(S)}(\alpha,\gamma),$ implies $d_{\mathcal{G}(S)}(\pi_{\gamma}(\alpha),\pi_{\gamma}(\beta)) \leq c,$ where $\alpha, \beta \in \mathcal{G}(S)$ and $\pi_{\gamma}: \mathcal{P}(S) \ra 2^{\gamma}$ is a nearest point projection.  In \cite{MM1}, as well as independently in \cite{behrstock}, it is proven that a metric space with a transitive family of quasi-geodesics with $(a,b,c)$-contraction is $\delta$-hyperbolic.  

In \cite{behrstock}, it is shown that the complexes $\mathcal{M}(S_{0,4}), \mathcal{M}(S_{1,1}), \mathcal{P}(S_{0,5}), \mathcal{P}(S_{1,2})$ all have transitive families of quasi-geodesics with $(a,b,c)$-contraction, which in particular implies they are $\delta$-hyperbolic.  In the opinion of the author, the methods of Behrstock in Section 5 of \cite{behrstock} applied to the above four complexes with appropriate modification could be applied to give the same conclusion regarding $\mathcal{C}_{sep}(S_{2,0}).$  This would provide an alternative proof of the fact that $\mathcal{C}_{sep}(S_{2,0})$ is $\delta$-hyperbolic.  We refer the reader to that paper, and presently limit ourselves to pointing out a couple of technical issues which must be addressed in the course of adapting the Behrstock argument in Section 5 of \cite{behrstock} to $\mathcal{C}_{sep}(S_{2,0}).$

Specifically, in Lemma 5.3 of \cite{behrstock} instead of $\mathcal{G} \setminus \{\mathcal{L} \cup \mathcal{R}\} $ consisting of at most one component domain, in the case of $\mathcal{C}_{sep}(S),$ the set $\mathcal{G} \setminus \{\mathcal{L} \cup \mathcal{R}\} $ now consists of at most two component domains.  In the event of two component domains, the domains are nested.  Moreover, in light of the previous remark, the coarsely well defined projection $\hat{\Phi}$ must include an additional case corresponding to where $\mathcal{G} \setminus \{\mathcal{L} \cup \mathcal{R}\} $ consists of two nested subsurfaces.  More generally, the definition of the projection $\hat{\Phi}$ must be modified to to have output a separating curve, as opposed to a marking, while making sure the desired properties of the map are unaffected.

\subsection{A quasi-distance formula for $\mathcal{C}_{sep}(S)$ in general}  Considering the arguments in step one of Section \ref{sec:proof}, one is tempted to believe that they can be appropriately modified to provide a proof of a quasi-distance formula for  $\mathcal{C}_{sep}(S)$ in general.  However, this is certainly not immediate.  Specifically, an explicit construction in Section 5 of \cite{sultan} implies that, for high enough genus, there exist complete clean markings of closed surfaces which are arbitrarily far (with respect to elementary moves) from any complete clean marking containing a separating base or transversal curve.  This is in stark contrast with the situation in $\mathcal{C}_{sep}(S_{2,0}),$ for which in Section \ref{sec:proof}, we make strong use of the fact that any complete clean marking is distance at most one from a complete clean marking containing a separating base or transversal curve.


\end{document}